\renewcommand{\epsilon}{\varepsilon}
\newcommand{\N}{{\mathbb N}}
\newcommand{\C}{{\mathbb C}}
\renewcommand{\phi}{\varphi}
\newcommand{\bcal}{\mathcal{B}}
\newcommand{\hcal}{\mathcal{H}}
\newtheorem{theorem}{{Theorem}}[section]
\newtheorem{lem}[theorem]{{Lemma}}
\newtheorem{prop}[theorem]{{Proposition}}
\newenvironment{remark}{\medskip\noindent{\it Remark:\/} }{\medskip}
\theoremstyle{definition}
\numberwithin{equation}{section}
\def \N {\mathbb N}
\def \C {\mathbb C}
\def \blambda {\bm{\lambda}}
\def \barlambda {\bar{\bm\lambda}}
\def \P {\mathbb P}
\title[An infinite dimensional balanced embedding problem]{ An infinite dimensional balanced embedding problem I:existence}
\author{Jingzhou Sun and Song Sun}
\thanks{The first author is partially supported by NNSF of China no.11701353. The second author is partially supported by an NSF grant and the Simons Collaboration Grant in Special Holonomy.  }
\address{Department of Mathematics, Shantou University, Shantou City, Guangdong Province 515063, China}
\address{Department of Mathematics, University of California-Berkeley, Berkeley, California 94720, USA}
\email{jzsun@stu.edu.cn, sosun@berkeley.edu}
\begin{document}
	\begin{abstract}
		We investigate the problem of balanced embedding of a non-compact complex manifold into an infinite-dimensional projective space. In this paper we prove the existence  of such an embedding in a model case. The strategy is by using a gradient flow in a Hilbert space; both the long-time existence and convergence at infinite time are non-trivial. The long time existence is established by choosing a perturbation of the ODE; the convergence depends on a priori bounds that uses techniques in the proof of the Tauber-Hardy-Littlewood theorem.
	\end{abstract}
	
	\maketitle
	

	\section{Introduction}
	
	In this paper we study a model problem concerning balanced embedding of a non-compact complex manifold into a projective Hilbert space. 
	
	The general set-up is as follows: let $\mathcal H$ be a separable complex Hilbert space, and let $\mathbb P(\mathcal H)$ be the associated projective Hilbert space, viewed as a set. We denote by $\mathfrak h(\mathcal H)$ the space of bounded self-adjoint operators on $\mathcal H$. There exists a natural injective map 
	$$\iota: \mathbb P(\mathcal H)\rightarrow \mathfrak h(\mathcal H); [z]\mapsto \frac{z\otimes z^*}{|z|^2},$$
	where $z\otimes z^*$ is the operator defined by $y\mapsto \langle y,z\rangle z,y\in \hcal$.
	Let $X$ be a finite-dimensional (possibly non-compact) complex manifold with a volume form $d\mu$. We say a map $F: X\rightarrow \mathbb P(\mathcal H)$ is a holomorphic embedding if it is injective and locally induced by a holomorphic embedding into $\mathcal H$.
	
	Often $F$ arises from an \emph{$L^2$ embedding}. More specifically, suppose $(L, h)$ is a hermitian holomorphic line bundle over $X$. Then, we can define $\mathcal H$ as the dual of the Hilbert space of $L^2$ integrable holomorphic sections of $L$. The map $F$ can be given by the natural evaluation map.
	We say a holomorphic embedding $F: X\rightarrow \mathcal H$ is  \emph{balanced} if it satisfies the equation
	\begin{equation}
	c(F)\equiv\int_{X} \iota(F(z)) d\mu=C\cdot\text{Id}, 
	\end{equation}
	where $C$ is a constant.
	
	Such balanced embeddings into projective spaces are well-studied when $\mathcal H$ is finite-dimensional. See for example \cite{BLY}. There is a more non-linear variant of the problem when the involved volume form is induced from the Fubini-Study metric on the projective space $\mathbb P(\mathcal H)$. 
	The latter has connections with geometric invariant theory, which is particularly relevant for the finite-dimensional quantization of geometric partial differential equations on projective manifolds (see for example Donaldson \cite{donaldson2001}).

	More generally, we consider a pair $(X, D)$, where $D$ is a divisor in $X$. Let $d\mu$ be a volume form on $X$ and $d\nu$ be a volume form on $D$. We say that a holomorphic embedding $F: X\rightarrow \mathbb P(\mathcal H)$ is $(\beta, C)$-balanced for some $\beta, C\in \mathbb R$ if
	\begin{equation}\label{first balance equation} \int_X \iota(F(z)) d\mu+\beta \int_D \iota(F(z)) d\nu=C\cdot\text{Id}. 	
	\end{equation}
 The motivation for this concept comes from its potential connection with GIT stability and the existence of canonical metrics on $X$ with singularities along $D$ (see \cite{Donaldson10, Sun, SS} for discussion). Note that in finite dimensions, by taking the trace of Equation \eqref{first balance equation} we see the constant $C$ is determined by $\beta$ , but in infinite dimensions this is not the case.
	
	In this paper, our focus is on  a local model case. Specifically, we consider the setting where $X$ is the complex plane $\mathbb C$ equipped with the standard volume form $d\mu =\frac{\omega_0}{\pi}$, and $D$ is the origin ${0}$. We take $L$ to be the trivial holomorphic line bundle. For any $\beta\in \mathbb R$, $L$ admits a singular hermitian metric $h_\beta \equiv |z|^{-2\beta}$ whose Chern connection is flat with holonomy $2\pi\beta$ around $D$. This singular hermitian metric can be viewed as a canonical metric associated with the triple $(L, D, \beta)$. Our main result in this paper proves an algebraic counterpart. Below, $\mathcal H$ is the Hilbert space of entire holomorphic functions on $\mathbb C$ endowed with the $L^2$ inner product associated with $h_0$.

	\begin{theorem}\label{t:main}
		For $\beta\in [0, 1)$ there exists a $(\beta ,1)$-balanced embedding $F: X\rightarrow \mathbb P(\mathcal H)$ . 
	\end{theorem}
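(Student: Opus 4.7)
\smallskip\noindent\textbf{Proof proposal.}
By the rotational symmetry of $(X, D, d\mu)$, I would look for a $U(1)$-equivariant embedding of the form $F(z) = [(c_n z^n)_{n\ge 0}]$ with weights $c_n > 0$. Writing $b_n = c_n^2$ and $\phi(u) = \sum_{k\ge 0} b_k u^k$, the rank-one projection $\iota(F(z))$ has matrix entries $c_m c_n z^m \bar z^n/\phi(|z|^2)$. Passing to polar coordinates, the integration over $\theta$ kills every off-diagonal entry, while the boundary contribution $\beta \iota(F(0))$ is the rank-one projection onto $e_0$. The balance equation \eqref{first balance equation} with $C=1$ therefore decouples into the scalar infinite system
\begin{equation}\label{scalar-system}
    b_n \int_0^\infty \frac{u^n}{\phi(u)}\,du \;=\; 1 - \beta\,\delta_{n0}, \qquad n\ge 0,
\end{equation}
which for $\beta = 0$ is solved by the Fock weights $b_n = 1/n!$ (giving $\phi = e^u$) -- a canonical reference point.

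I would next realize \eqref{scalar-system} as the critical-point equation of a suitable (formally convex) functional on positive sequences, of the shape
$$E(b) \;=\; -\sum_{n\ge 0}(1-\beta\,\delta_{n0})\log b_n + \int_0^\infty \log\phi(u)\,du + \text{counterterms},$$
and run its gradient flow
$$\dot b_n \;=\; (1-\beta\,\delta_{n0}) - b_n\int_0^\infty \frac{u^n}{\phi(u)}\,du$$
in a weighted $\ell^2$ Hilbert space, starting from the Fock sequence. For long-time existence, the moments $I_n(b) := \int_0^\infty u^n/\phi(u)\,du$ depend globally on the entire tail of $(b_k)$ and are finite only when $\phi$ is of infinite order, so one must rule out the $b_n$ collapsing to $0$, escaping to $\infty$, or losing tail decay. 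Following the abstract's suggestion, I would introduce a perturbation of the ODE -- for instance a finite-$N$ truncation coupled to the Fock tail, or a stabilizing term keeping $n!\,b_n$ in a compact interval of $(0,\infty)$ -- establish global existence uniformly in the perturbation parameter, and then relax it.

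The main obstacle is convergence as $t \to \infty$. Monotonicity of $E$ produces only subsequential accumulation along the flow; upgrading this to a genuine limit satisfying \eqref{scalar-system} requires translating the averaged, time-integrated decay of $E$ into pointwise asymptotics of $b_n$, uniformly in $n$. This is exactly the regime in which Hardy-Littlewood Tauberian theorems operate: $I_n(b)$ is a moment-type Abelian transform of $(b_n)$, and recovering asymptotic information about the coefficients from such averages is possible only under a one-sided ``Tauberian'' regularity bound on the sequence. Producing such a bound as an a priori consequence of the perturbed flow -- in a form stable under both $t\to\infty$ and the removal of the perturbation -- is the technically hardest step; once it is in hand, taking $t\to\infty$ and then relaxing the perturbation yields a positive solution of \eqref{scalar-system} and, via $c_n = \sqrt{b_n}$, the desired $(\beta,1)$-balanced embedding.
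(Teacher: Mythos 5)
Your reduction to the scalar system and your overall roadmap (gradient flow from the Fock point, perturb for global existence, Tauberian input for convergence) do match the paper's strategy, but the proposal stops exactly where the proof begins: the steps you flag as ``hardest'' are not carried out, and one of them cannot be carried out as you describe it. First, the perturbation: neither a finite-$N$ truncation nor a term confining $n!\,b_n$ is shown to close up; the paper's specific choice is to damp the $i$-th component of the vector field by $s^i$, i.e. $\dot\lambda_i=-s^iF_i(\blambda)$ with $\lambda_i=\log b_i$, precisely because this forces the vector field into $\ell^2$ and because the perturbed energy $E_s=\sum_i s^iF_i^2$ is monotone along the flow, giving the uniform bounds needed to iterate the Picard--Lindel\"of step to all times before letting $s\to1$. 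Second, the a priori bounds: the Hall--Williamson/Karamata argument yields a lower bound $e^{\lambda_i}\geq C_6C_5^{i+1}/\Gamma(i+1)$ with no restriction on $\beta$, but the matching upper bound $e^{\lambda_i}\leq C^{-(i+1)}/\Gamma(i+1)$ hinges on the elementary estimate $1-\beta+F_0(\blambda)\leq\int_0^\infty(1+e^{\lambda_2}x^2)^{-1}dx=\tfrac{\pi}{2}e^{-\lambda_2/2}$ together with $|F_0|\leq\beta$, which is only useful when $1-2\beta>0$.

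This is the substantive gap: your plan, even if every step were executed, proves the theorem only for $\beta<\tfrac12$. To reach all $\beta\in[0,1)$ the paper adds a bootstrap that is absent from your proposal: having produced a balanced sequence $\hat{\blambda}$ for $\beta_1=\tfrac12-\delta$, it restarts the flow for a larger angle parameter $\beta$ from the initial datum $\hat{\blambda}$ (whose coefficients obey the same two-sided factorial bounds, so the convergence lemmas still apply); the energy along this new flow is at most $(\beta-\beta_1)^2$, hence $|F_0|\leq\beta-\beta_1$ and the upper-bound argument now needs only $1-2\beta+\beta_1>0$. Iterating $\beta_{n+1}=\tfrac{1+\beta_n-\delta}{2}$ and letting $\delta\to0$ exhausts $[0,1)$. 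Without this continuity-type iteration, or a genuinely $\beta$-uniform a priori bound that you have not supplied, the proposal does not prove the stated theorem. (A smaller point: your ODE $\dot b_n=-F_n$ is not the flow whose dissipation identity the paper exploits; working in the variables $\lambda_n=\log b_n$ both preserves positivity automatically and makes $\sum_iF_i^2$ decrease along the flow, which is what produces the subsequence $t_q$ with $F_i(\blambda(t_q))\to0$.)
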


	\begin{remark}
		\begin{itemize}
		\item By analogy with the Kempf-Ness theorem in finite dimensions, one may formally view this result as saying that the embedding of $(X, D)$ in $\mathbb P(\mathcal H)$ should  be ``Chow polystable", but the latter notion in infinite dimensions has not yet been developed in the literature. 
			\item  The uniqueness and geometry of such embedding, which is  highly non-trivial and much more involved, will be  studied in a sequel to this article.
			\item The case when $\beta=0$ admits an easy explicit solution as we explain below;  the case when $\beta=1$ admits an explicit degenerate solution.
		\end{itemize}	
	\end{remark}
	
	We will now give an outline of the main steps in the proof. By using the action of $U(\mathcal H)$, we can assume that a holomorphic embedding $F: X\rightarrow \mathbb P(\mathcal H)$ has the form $F(z)=[a_0: a_1z^1:a_2z^2: \cdots]$. We denote $e^{\lambda_i}=|a_i|^2$ and define the associated function on $[0, \infty)$ as follows:
	$$f(x)=\sum_{i=0}^\infty e^{\lambda_i}x^i,$$
	The condition of being $(\beta, 1)$-balanced becomes the system of equations satisfied by $\blambda=\{\lambda_i\}_{i=0}^\infty$.
	\begin{equation} \label{e-1}
	F_i(\blambda)\equiv \int_{0}^{\infty}\frac{e^{\lambda_i} x^i}{\sum_{j=0}^{\infty}e^{\lambda_j}x^j}dx-(1-\beta\delta_{0i})=0, \ \   i=0, 1, \cdots
	\end{equation}
	This is equivalent to the condition that 
	\begin{equation}\label{e-2}
	\int_0^{\infty} \frac{f(sx)}{f(x)}dx=\frac{1}{1-s}-\beta, 
	\end{equation}
	for all $s\in[0,1)$. 
	Theorem \ref{t:main} then reduces to the statement that there is a unique (up to constant multiple) solution $\blambda$ to \eqref{e-1}. 
	
	When $\beta=0$ there is an explicit solution $\barlambda$ to \eqref{e-1} given by $\bar\lambda_i=\log\frac{1}{\Gamma(i+1)}$. 
	For $\beta\in (0,1)$ we do not have explicit solutions.  To prove the existence of a solution we consider the natural flow equation 
	\begin{equation}\frac{d}{dt}\lambda_i(t)=F_i(\blambda(t)), \ \ \ i=0, \cdots, 
	\label{e:ODE first}	
	\end{equation}
	with initial data $\blambda(0)=\barlambda$. 
	This is a non-linear ODE in infinite dimensions. Formally it is the gradient flow of a Kempf-Ness function and it decreases the energy function 
	$$E(\blambda)\equiv \sum_i(F_i(\blambda))^2. $$
	However, it does not seem easy to directly find a solution $\blambda(t)$ with finite energy, even for a short time. To overcome this problem, our idea is to introduce a family of perturbed ODEs for $s\in (0, 1)$
	$$\frac{d}{dt}\lambda_i(t)=s^iF_i(\blambda(t)), \ \ \ i=0, \cdots.$$
	We will first prove the existence of a solution $\blambda^s(t) (t\in [0, \infty))$ for all $s\in (0,1)$. Then we take limits as $s\rightarrow1$ to get a  solution $\blambda(t)$ to \eqref{e:ODE first} for $t\in [0, \infty)$. It has the property that $E(\blambda(t))$ is finite and decreasing in $t$. It also follows that we have $\|\blambda(t)-\barlambda\|_{\ell^2}\leq Ct$. 
	
	To obtain a solution to \eqref{e-1} it remains to take limits as $t\rightarrow\infty$. This involves a priori bounds on the components of $\blambda(t)$ under the finite energy assumption. However in general we do not expect a strong uniform bound such as $\|\blambda(t)-\barlambda\|_{\ell^\infty}$. By analogy, in the finite dimensional situation such a uniform bound only holds when we are in the GIT-stable situation. In our infinite dimensional setting we do not have a notion of GIT stability. Instead we can only prove uniform bound on $\|\lambda_i(t)-\bar\lambda_i\|$ for each fixed $i$, which suffices to give a solution to \eqref{e-1}, though we lose the asymptotic control of the solution. This is similar to the possible jumping of orbits in the GIT-unstable case in finite dimensions. Our argument uses the idea of Hall-Williamson \cite{HW} and is related to the Tauber-Hardy-Littlewood theorem in analysis.

One motivation for this work is related to the longstanding difficult question of asymptotic stability of singular polarized varieties with canonical metrics, which is an  extension of Donaldson's theorem in \cite{donaldson2001} for smooth manifolds. The idea of proof in \cite{donaldson2001} goes as follows, suppose there is a polarized K\"ahler manifold $(X, L)$ with $Aut(X, L)$ discrete and suppose $\omega$ is a K\"ahler metric with constant scalar curvature in $2\pi c_1(L)$, then for $k$ large, using the asymptotic expansion of the Bergman kernels, one can show that the induced $L^2$ embedding of $X$ into $\P(H^0(X, L^k)^*)$ is approximately balanced. A conceptual reason for this is that around each point $p$, as $k\rightarrow\infty$ locally the problem is modeled  by the trivial line bundle over $\C^n$ endowed with the Guassian hermitian metric $e^{-|z|^2/2}$, and the $L^2$ embedding of $\C^n$ into $\mathbb P(\mathcal H)$ is \emph{exactly} balanced.  Then one makes use of a quantitative perturbation theorem to make the embedding precisely balanced, which then implies asymptotic Chow stability. When $(X, \omega)$ admits mild singularities (for example, when $\omega$ has conical singularities along a divisor), the situation is much more complicated. A conceptual difficulty is that around the singular locus one needs to use a different model than the above, which is related to the notion of $(\beta, C)$-balanced embedding introduced  before. The problem we studied here is a simplified model. Our investigations demonstrate the difficulty of the such problem and we hope the analysis could be further extended and improved.  For example, for applications to the above problem, one would like to prove the uniqueness of solution to \eqref{e-2} and find a much more quantitative understanding. This will be discussed in the sequel.


	This paper is organized as follows. In Section 2 we  prove the long time existence of the solution of the evolution equation. In section 3 we prove a priori estimates for the evolution equation. In section 4 we prove Theorem \ref{t:main}.

	\subsection*{Acknowledgements}  The second author is grateful to Professor Simon Donaldson for suggestions and discussions when part of the results was derived in 2010-2011 at Imperial College London. We thank Yueqing Feng for reading the article and providing comments. 
	
	\section{The evolution equation and long time existence}

	For $p\in [1, \infty]$ we denote by ${\ell}^p$ the Banach space of all sequences $\blambda=\{\lambda_i\}_{i=0}^\infty$ endowed with the $l^p$ norm $\|\cdot\|_{\ell^p}$.  We fix a sequence $\bar{\bm \lambda}\in \ell^2$ given by
	\begin{equation*} e^{\bar{\lambda}_i}=\frac{1}{\Gamma(i+1)}.
	\end{equation*}
	Denote by $\tilde {\ell}^p$ the shifted space consisting of all sequences $\blambda$ such that $\blambda-\bar{\bm \lambda}\in \ell^p$. 
	We define two mappings
	$$\bm I, \bm F: \tilde \ell^\infty\rightarrow \ell^\infty$$
	via
	\begin{equation*}
	I_i(\blambda)=\int_{0}^{\infty} \frac{e^{\lambda_i}x^i}{\sum_{j=0}^{\infty}e^{\lambda_j}x^j}dx, 
	\end{equation*} 
	and
	\begin{equation*}  F_i(\blambda)=\int_{0}^{\infty} \frac{e^{\lambda_i}x^i}{\sum_{j=0}^{\infty}e^{\lambda_j}x^j}dx-1+\beta\delta_{0i}.
	\end{equation*} 
	Notice that $I_i(\barlambda)=1$ for all $i$. It is easy to see both $\bm I$ and $\bm F$ are well-defined. Indeed,  for $\bm\lambda,\bm\lambda'\in \tilde{\ell}_\infty$, one can see that
	\begin{equation}\label{e:I comparions}e^{-2\|\bm\lambda-\bm\lambda'\|_{\ell^\infty}}\leq \frac{I_i(\bm\lambda)}{I_i(\bm\lambda')}\leq e^{2\|\bm\lambda-\bm\lambda'\|_{\ell^\infty}}, 	
	\end{equation}
	and
	\begin{equation}{|F_i(\bm\lambda)-F_i(\bm\lambda')|}\leq (e^{2\|\bm\lambda-\bm\lambda'\|_{\ell^\infty}}-1){I_i(\bm\lambda)}.
	\label{e:F comparison}	
	\end{equation}

	We consider the ODE 
	\begin{equation}
	\label{e-evo}	
	\frac{d}{dt}\blambda(t)=-\bm F(\blambda(t)).
	\end{equation}
	We say $\blambda(t)$ is a $\tilde\ell^p$ solution for $t$ in an interval $J$ if $t\mapsto \blambda(t)$ is a $C^1$ map from $J$ into $\tilde \ell^p$ and the equation holds for all $t\in J$. 
	The main result of this subsection is
	\begin{theorem} \label{t: ODE existence}
		There is a unique $\tilde \ell^2$ solution $\blambda(t)$ to \eqref{e-evo} for $t\in [0, \infty)$ with $\blambda(0)=\barlambda$.
	\end{theorem}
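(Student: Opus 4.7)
The main difficulty is that $\bm F$ does not obviously send $\tilde\ell^2$ into $\ell^2$, so Picard--Lindel\"of does not apply directly in $\tilde\ell^2$. Following the strategy outlined in the introduction, my plan is to regularize by solving
\[
\tfrac{d}{dt}\blambda^s_i(t) = -s^i F_i(\blambda^s(t)), \qquad s\in(0,1),
\]
globally for each $s$, and then to pass to the limit $s\to 1$. For fixed $s<1$ the weight $s^i$, combined with the pointwise bound $|F_i(\blambda)|\leq C$ coming from \eqref{e:I comparions} and the Lipschitz estimate \eqref{e:F comparison}, makes the regularized right-hand side locally Lipschitz from $\tilde\ell^\infty$ (and hence from $\tilde\ell^2$) into $\ell^2$, so Picard iteration produces a short-time $\tilde\ell^2$ solution $\blambda^s(t)$.

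The essential a priori estimate comes from a weighted energy. A direct differentiation gives $M_{ij}:=\partial_j F_i = \delta_{ij} I_i(\blambda) - \int p_i(x)p_j(x)\,dx$, where $p_i(x):=e^{\lambda_i}x^i/f(x)$ satisfies $\sum_i p_i(x)=1$ pointwise; the Cauchy--Schwarz inequality at each $x$ then yields $\sum_{i,j} v_i M_{ij}v_j \geq 0$, so $M$ is positive semidefinite. Differentiating $E^s(\blambda):=\sum_i s^i F_i(\blambda)^2$ along the regularized flow gives
\[
\frac{d}{dt} E^s(\blambda^s(t)) = -2\sum_{i,j}(s^iF_i)\,M_{ij}\,(s^jF_j)\leq 0,
\]
so $E^s(\blambda^s(t))\leq E^s(\barlambda)=\beta^2$ for all $t\geq 0$. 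This forces $|F_i(\blambda^s(t))|\leq \beta\, s^{-i/2}$, hence $|\dot\lambda^s_i(t)|\leq \beta\, s^{i/2}\leq \beta$, and yields the crucial componentwise bound $|\lambda^s_i(t)-\bar\lambda_i|\leq \beta t$ \emph{uniformly in $s$ and $i$}. In particular $\|\blambda^s(t)-\barlambda\|_{\ell^\infty}$ remains finite on compact intervals, giving global existence of $\blambda^s$.

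To take the limit $s\to 1$, fix $T>0$. The family $\{\blambda^s\}$ is componentwise uniformly bounded and equicontinuous on $[0,T]$, so Arzel\`a--Ascoli and a diagonal extraction yield a pointwise subsequential limit $\blambda(t)$. The bound $|\lambda_i(t)-\bar\lambda_i|\leq\beta t$ together with $e^{\bar\lambda_i}=1/\Gamma(i+1)$ traps $f^s(x)$ between $e^{-\beta t}e^x$ and $e^{\beta t}e^x$, so dominated convergence gives $F_i(\blambda^s(\tau))\to F_i(\blambda(\tau))$ for each $i$. Passing to the limit in the integral form $\lambda^s_i(t) = \bar\lambda_i - s^i\int_0^t F_i(\blambda^s(\tau))\,d\tau$ shows that $\blambda(t)$ solves the unregularized ODE pointwise. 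A Fatou argument on truncated sums transfers the energy bound to $E(\blambda(t))\leq \beta^2$, and Cauchy--Schwarz then yields $\|\blambda(t)-\barlambda\|_{\ell^2}\leq \beta t$, placing $\blambda(t)$ in $\tilde\ell^2$ and upgrading it to a $C^1$ curve into $\tilde\ell^2$.

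Uniqueness in $\tilde\ell^2$ follows from a standard Gronwall argument using \eqref{e:F comparison} on a bounded $\tilde\ell^\infty$-neighborhood of the trajectory (noting any $\tilde\ell^2$ solution is locally bounded in $\tilde\ell^\infty$). The step I expect to be the main obstacle is the limit passage $s\to 1$: upgrading componentwise convergence to the regularity needed for $\blambda(t)$ to be genuinely $C^1$ into $\tilde\ell^2$, rather than merely satisfying the equation pointwise, requires uniform control of the $\ell^2$-tails of $\bm F(\blambda(t))$ along the trajectory, for which the energy bound $E(\blambda(t))\leq\beta^2$ and an equicontinuity-style tail argument are essential.
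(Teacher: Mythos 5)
Your proposal is correct and follows essentially the same route as the paper: the $s^i$-weighted regularization, monotonicity of the perturbed energy $E_s$ (your positive-semidefinite matrix $M$ is just the symmetrized form of the paper's $(\dot\lambda_i-\dot\lambda_j)^2$ integrand), the uniform componentwise bounds $|\dot\lambda_i^s|\leq\beta$ feeding Arzel\`a--Ascoli plus a diagonal argument, dominated convergence to pass to the limit in the integral equation, and Fatou to transfer the energy bound. The only cosmetic differences are that you run Picard directly in $\tilde\ell^2$ rather than in $\tilde\ell^\infty$ and prove uniqueness by Gronwall rather than by the fixed-point lemma; the $C^1$-into-$\tilde\ell^2$ regularity issue you flag at the end is treated at the same level of detail in the paper itself.
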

	It turns out that even the short time existence does not follow directly from the general theory. We will instead consider first a family of perturbed equations. For fixed $s\in (0, 1]$, we define $$\bm F^s: \tilde\ell^\infty\rightarrow \ell^\infty$$ by setting $$F^s_i(\blambda)=s^i \cdot F_i(\blambda)$$ for all $i$. Notice  that if $s<1$, then $\bm F^s$ has image in $\ell^2$--this is a crucial fact for us. We denote $$G^s(\bm\lambda)\equiv \sup_i s^iI_i(\bm\lambda)$$ and $$H^s(\bm\lambda)\equiv \|\bm F^s(\bm\lambda)\|_{\ell^\infty}.$$

	We consider the perturbed equations
	\begin{equation}
	\label{e-evos}	
	\frac{d}{dt}\blambda(t)=-\bm F^s(\bm\lambda(t))
	\tag{*s}
	\end{equation}
	
	\begin{prop} \label{p:perturbed ODE}
		For each $s\in (0, 1)$, there is a unique solution $\blambda^s(t)\in \tilde\ell^2$ to \eqref{e-evos} for $t\in [0,\infty)$ with $\blambda^s(0)=\barlambda$.
	\end{prop}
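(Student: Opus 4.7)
The plan is to establish short-time existence of a $\tilde\ell^2$ solution by Picard--Lindel\"of in Banach space, and then to produce a Lyapunov-type a priori bound that rules out finite-time blow-up.

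For local existence I would first check that $\bm F^s:\tilde\ell^2\to\ell^2$ is well defined and locally Lipschitz. Applying \eqref{e:I comparions} with $\bm\lambda'=\barlambda$ gives $I_i(\blambda)\le e^{2\|\blambda-\barlambda\|_{\ell^\infty}}$ uniformly in $i$, so $|F_i(\blambda)|$ is bounded on each bounded subset of $\tilde\ell^2$ (using $\|\cdot\|_{\ell^\infty}\le\|\cdot\|_{\ell^2}$); combined with $\sum_i s^{2i}=(1-s^2)^{-1}<\infty$, this shows $\bm F^s(\blambda)\in\ell^2$. The estimate \eqref{e:F comparison} together with $e^x-1\le 2x$ for small $x\ge0$ then yields
\[
\|\bm F^s(\blambda)-\bm F^s(\blambda')\|_{\ell^2}\le \frac{C_R}{\sqrt{1-s^2}}\,\|\blambda-\blambda'\|_{\ell^2}
\]
on the ball $\{\|\blambda-\barlambda\|_{\ell^2}\le R\}$, and Picard--Lindel\"of in Banach space furnishes a unique $C^1$ solution $\blambda^s(t)$ in $\tilde\ell^2$ on some maximal interval $[0,T^s)$.

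For the long-time bound, I would introduce the Lyapunov functional
\[
E^s(\blambda)\equiv \sum_{i=0}^\infty s^{i}\,F_i(\blambda)^2.
\]
Setting $p_i(x)=e^{\lambda_i}x^i\big/\sum_j e^{\lambda_j}x^j$, so that $\sum_i p_i(x)\equiv 1$ for every $x>0$, the Jacobian reads
\[
A_{ij}\equiv \frac{\partial F_i}{\partial\lambda_j}=\int_0^\infty p_i(x)\bigl(\delta_{ij}-p_j(x)\bigr)\,dx,
\]
and is symmetric positive semi-definite because
\[
\langle v,Av\rangle=\int_0^\infty \Bigl(\sum_i v_i^2 p_i(x)-\bigl(\textstyle\sum_i v_i p_i(x)\bigr)^{\!2}\Bigr)\,dx\ge 0
\]
by Cauchy--Schwarz in the discrete probability measure $\{p_i(x)\}_i$. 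Differentiating $E^s$ along the perturbed flow (the exchange of sum and derivative is justified by dominated convergence, using the uniform bounds on $|F_i|$ on bounded time intervals and the summable weight $s^i$) gives
\[
\frac{d}{dt}E^s(\blambda^s(t))=-2\sum_{i,j}(s^i F_i)\,A_{ij}\,(s^j F_j)=-2\langle \bm F^s,\,A\,\bm F^s\rangle\le 0.
\]
Since $F_i(\barlambda)=\beta\delta_{0i}$, we have $E^s(\barlambda)=\beta^2$, so $E^s(\blambda^s(t))\le\beta^2$ for all $t\in[0,T^s)$. Because $s^{2i}\le s^i$ for $s\in(0,1)$, it follows that $\|\bm F^s(\blambda^s(t))\|_{\ell^2}^2\le E^s(\blambda^s(t))\le\beta^2$, and integrating the ODE in $\ell^2$ yields the linear growth bound $\|\blambda^s(t)-\barlambda\|_{\ell^2}\le\beta t$. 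This rules out blow-up on any finite interval and forces $T^s=\infty$; uniqueness on $[0,\infty)$ then follows from local uniqueness on each compact subinterval.

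The main obstacle I anticipate is identifying the correct Lyapunov functional. The naive choice $\|\bm F^s\|_{\ell^2}^2=\sum_i s^{2i}F_i^2$ does not give a manifestly symmetric quadratic form upon differentiation along the perturbed flow, and its monotonicity is not evident. The weighted choice $E^s=\sum_i s^{i}F_i^2$ is tailored so that the quadratic form $\bm F^T D^s A D^s \bm F$ (with $D^s=\diag(s^i)$) appears symmetrically, making the positive semi-definiteness of the Kempf-Ness Hessian $A$ directly usable; the extra weight also provides the decay needed for the dominated-convergence bookkeeping in the termwise computation.
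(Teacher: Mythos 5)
Your proof is correct and follows essentially the same route as the paper: short-time existence by Picard--Lindel\"of (the paper contracts in $C([0,b],\ell^\infty)$ rather than directly in $\ell^2$, which is immaterial), followed by the identical weighted Lyapunov functional $E_s=\sum_i s^i F_i^2$, whose monotonicity and initial value $\beta^2$ give the uniform bound $\|\bm F^s\|_{\ell^2}\le\beta$ that prevents finite-time breakdown. The only cosmetic difference is that the paper phrases the continuation step via uniform a priori bounds on $G^s$ and $H^s$, which bound the local existence time from below, rather than via a no-blow-up argument in the $\ell^2$ ball.
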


	To prove this we use the classical Picard-Lindel\"of theory. Given $b>0$ we  consider the space $\bcal\equiv C([0, b],\ell^\infty)$ of continuous maps from $[0, b]$ to $\ell^\infty$. Equipped with the sup-norm, $\bcal$ is then a Banach space. Denote by $D_r(\xi)$ the closed ball of radius $r$ centered at $\xi$ in $\bcal$.
	
	Suppose we are given $\bm\lambda_0=\barlambda+\bm\mu_0\in \tilde\ell^\infty$. 
	Writing $\bm\lambda(t)=\barlambda+\bm\mu(t)$, then $\bm\lambda(t)(t\in [0, b])$ is a solution to \eqref{e-evos} in $\tilde\ell^\infty$ with $\bm\lambda(0)=\bm\lambda_0$  if and only if $\bm\mu(t)$ is a fixed point of the integral operator $T:\bcal\to \bcal$ defined by
	$$T(\bm\mu(t))(x)=\bm\mu_0+\int_{0}^{x}\bm F^s(\bm\mu(t)+\barlambda)dt. $$
	By \eqref{e:I comparions} and \eqref{e:F comparison} we have 
	$$\| \bm F^s(\bm\mu+\barlambda)-\bm F^s(\bm\mu'+\barlambda)\|_{\ell^\infty}
	\leq21 G^s(\bm\lambda_0)\|\bm\mu-\bm\mu'\|_{\ell^\infty}$$ 
	whenever $\|\bm\mu-\bm\mu_0\|\leq \frac12$ and $\|\bm\mu'-\bm\mu_0\|\leq \frac12$.
	
	Denote by $\tilde{\bm\mu}_0\in \bcal$ the constant map $\tilde{\bm\mu}_0(t)\equiv \bm\mu_0$.  Then we have 
	$$\| T(\tilde{\bm\mu}_0)-\tilde{\bm\mu}_0 \|_{\bcal} \leq bH^s(\blambda_0), $$ and
	$$\| T(\bm\mu(t))-T(\bm\mu'(t)) \|_{\bcal}\leq 21bG^s(\bm\lambda_0)\| \bm\mu(t)-\bm\mu'(t) \|_{\ell^\infty} $$
	for $ \bm\mu(t),\bm\mu'(t) \in  D_{\frac12}(\tilde{\bm\mu}_0)$. 
	
	Let $b=({3H^s(\blambda_0)+100G^s(\blambda_0)})^{-1}$, then $T$ is a contraction mapping from $D_{\frac12}(\tilde{\bm\mu}_0)$ into itself. By the Banach fixed point theorem we get 
	
	\begin{lem}\label{l:short time existence}
		For any $s\in (0, 1]$, there exists a unique solution $\tilde\ell^\infty$ solution $\blambda^s(t)$ to the equation \eqref{e-evos} for $t\in [0, b]$ with $\blambda^s(0)=\blambda_0$. \end{lem}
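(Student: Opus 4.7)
The plan is to apply the Banach fixed point theorem to the integral operator $T$ restricted to the closed ball $D_{\frac12}(\tilde{\bm\mu}_0)$ inside $\bcal$. The two inequalities displayed immediately before the statement already supply all the analytic ingredients; what remains is a short bookkeeping check of the hypotheses of the contraction principle.

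First I would verify that $T$ sends $D_{\frac12}(\tilde{\bm\mu}_0)$ into itself. For $\bm\mu\in D_{\frac12}(\tilde{\bm\mu}_0)$, applying the triangle inequality together with the two displayed bounds (with $\bm\mu'=\tilde{\bm\mu}_0$ in the Lipschitz one) yields
$$\|T(\bm\mu)-\tilde{\bm\mu}_0\|_{\bcal}\leq \|T(\tilde{\bm\mu}_0)-\tilde{\bm\mu}_0\|_{\bcal}+\|T(\bm\mu)-T(\tilde{\bm\mu}_0)\|_{\bcal}\leq bH^s(\blambda_0)+\frac{21}{2}bG^s(\blambda_0),$$
which with the choice $b=(3H^s(\blambda_0)+100G^s(\blambda_0))^{-1}$ is at most $\frac13+\frac{21}{200}<\frac12$. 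Second, this same choice of $b$ makes the Lipschitz constant $21bG^s(\blambda_0)$ strictly less than $\frac{21}{100}<1$, so $T$ is a genuine contraction on $D_{\frac12}(\tilde{\bm\mu}_0)$. The Banach fixed point theorem then produces a unique $\bm\mu\in D_{\frac12}(\tilde{\bm\mu}_0)$ with $T\bm\mu=\bm\mu$. Setting $\blambda^s(t)=\barlambda+\bm\mu(t)$ and differentiating the integral identity $\bm\mu(t)=\bm\mu_0+\int_0^t \bm F^s(\barlambda+\bm\mu(\tau))\,d\tau$, the continuity of $\tau\mapsto \bm F^s(\barlambda+\bm\mu(\tau))$ in $\ell^\infty$ shows that $\blambda^s$ is a $C^1$ map into $\tilde\ell^\infty$ solving \eqref{e-evos} on $[0,b]$ with the prescribed initial value.

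For uniqueness among arbitrary $\tilde\ell^\infty$ solutions, not just those a priori lying in the ball, I would use that any such solution is continuous at $t=0$ with value $\blambda_0$, and hence remains in $D_{\frac12}(\tilde{\bm\mu}_0)$ on a possibly smaller sub-interval $[0,b']$; inside that sub-interval it coincides with the unique fixed point, and a standard connectedness/continuation argument then propagates the agreement over the full $[0,b]$. I do not anticipate any real obstacle at this stage: all substantive estimates are already in place, and the lemma is essentially an application of the standard Picard--Lindel\"of machinery dressed in the $\ell^\infty$-topology. The serious difficulties appear only afterwards, when one must extend $\blambda^s(t)$ to all of $[0,\infty)$ and eventually pass to the limit $s\to 1$.
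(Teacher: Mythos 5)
Your proposal is correct and follows exactly the paper's own route: the Picard--Lindel\"of/Banach fixed point argument on the ball $D_{\frac12}(\tilde{\bm\mu}_0)$ with the stated choice of $b$, using the two displayed estimates to check the self-mapping and contraction properties. You merely make explicit the arithmetic ($bH^s\leq\frac13$, $\frac{21}{2}bG^s\leq\frac{21}{200}$) and the uniqueness-outside-the-ball continuation step that the paper leaves implicit.
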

	In particular, by  setting $\blambda_0=\barlambda$, we obtain a unique $\tilde\ell^\infty$  solution $\blambda^s(t)$ to the equation \eqref{e-evos} with $\blambda^s(0)=\barlambda$ for all $s\in (0, 1]$ and $t\in [0, \underline b]$ for some $\underline b>0$.
	
	\begin{proof}[Proof of Proposition \ref{p:perturbed ODE}]
		Fix $s\in (0,1)$, to prove the long time existence to \eqref{e-evos} it suffices to obtain a uniform  a priori bound of $G^s(\blambda^s(t))$ and $H^s(\blambda^s(t))$. To simplify notation we will drop the superscript $s$ in $\blambda^s(t)$.  We define the perturbed energy function on $\tilde\ell^\infty$ by setting
		\begin{equation}
		E_s(\blambda)\equiv\sum_i s^i\cdot (F_i(\blambda))^2.
		\end{equation}
		Suppose the flow \eqref{e-evos} exists in $\ell^\infty$ for $t\in[0, T)$ for some $T<\infty$. Then we have $\bm F(\blambda(t))\in \ell^{\infty}$.  In particular,  we have $E_s(\bm\lambda(t))<\infty$. Using the equation it follows that $\blambda(t)\in \tilde\ell^2$ for $t<T$. Moreover, we have 
		\begin{eqnarray*}
			\frac{d}{dt}E_s(\blambda(t))&=&2\sum_{i=0}^{\infty}\int_0^\infty[\frac{\dot{\lambda}_ie^{\lambda_i}x^i\sum_{j=0}^{\infty}\dot{\lambda}_je^{\lambda_j} x^j}{(\sum_{j=0}^{\infty}e^{\lambda_j} x^j)^2}-\frac{\dot{\lambda}^2_ie^{\lambda_i}x^i}{\sum_{j=0}^{\infty}e^{\lambda_j} x^j}]dx\\
			&=&-\sum_{i,j}	\int_0^\infty \frac{e^{\lambda_i+\lambda_j}x^{i+j}(\dot{\lambda}_i-\dot{\lambda}_j )^2  }{(\sum_{k=0}^{\infty}e^{\lambda_k} x^k)^2}dx
		\end{eqnarray*}
		Since $E_s(\blambda(0))=\beta^2$, we get that $E_s(\blambda(t))\leq \beta^2$ for $t\in [0, T)$. Immediately we obtain 
		$$H^s(\blambda(t))^2\leq \sum_i (F^s_i(\blambda(t)))^2\leq E_s(\blambda(t))\leq \beta^2\leq1.$$
		Moreover, for each $i$ we have 
		$$|F_i(\blambda(t))|\leq \beta{s^{-i/2}}.$$
		So  we see $I_i(\blambda(t))<1+{\beta}{s^{-i/2}}$. Hence $$G^s(\blambda(t))\leq \sup_i s^i(1+\beta s^{-i/2})\leq 2$$ is  also uniformly bounded. So we can apply Lemma \ref{l:short time existence} to get a $\tilde\ell^\infty$ solution $\blambda^s(t)$ for $t\in [0, \infty)$. The above discussion also shows that $\blambda(t) $ is indeed a $\tilde \ell^2$ solution.
		
	\end{proof}
	
	\begin{proof}[Proof of Theorem \ref{t: ODE existence}]
		
		We first notice from the above proof of Proposition \ref{p:perturbed ODE} that for all $s\in (0, 1)$ and $t\in [0, \infty)$ we have  $\blambda^s(t)\in \tilde{\ell}^2$. Moreover, we have  
		\begin{equation}
		\| \blambda^s(t)-\bar{\blambda}\|_{\ell^2}\leq \int_0^t\|F^s(\blambda^s(u))\|_{\ell^2}du\leq \beta t.
		\end{equation}
		Now we fix a number $A>0$. For $t\in[0,A]$, for each $i$ we have 
		$$|\lambda^s_i(t)|\leq \bar{\lambda}_i+\beta A,$$
		and 
		$$|\frac{d}{dt}{\lambda}^s_i(t)| =|F^s_i(\blambda^s(t))|\leq \beta.$$
		Therefore, we can apply the theorem of Arzela-Ascoli to get a sequence $s_k\to 1$ such that 
		$\lambda_i^{s_k}(t)$ converges uniformly to a function $\lambda_i(t)$ on $[0,A]$. Then by a diagonal argument, we get a subsequence, which, by abuse of notation, we still denote by $s_k\to 1$, such that 
		$\lambda_i^{s_k}(t)$ converges uniformly to a function $\lambda_i(t)$ on $[0,A]$ for all $i$. It remains to show that $\blambda(t)=\{\lambda_i(t)\}$ satisfies the desired equation.

		Recall that for $s\in (0, 1)$ we have $$\lambda^s_i(x)=\bar{\lambda}_i+\int_0^x F^s_i(\blambda^s(t))dt.$$
		For fixed $i$, as $s_k\to 1$, the left hand side converges uniformly to $\lambda_i(x)$, whereas the right hand side converges to 
		$\bar{\lambda}_i+\int_0^x F^s_i(\blambda(t))dt$ by the lemma below and the dominated convergence. 
		Therefore, $\blambda(t)$ satisfies the evolution equation \eqref{e-evo} for $t\in [0, A]$. Notice that by our convergence for each $t\in [0,A]$,
		we have the uniform estimates \begin{equation}\label{e-F}
		\| F(\blambda(t))\|_{l^2}\leq \beta,
		\end{equation}
		and
		\begin{equation}\label{e-lambda}
		\| \blambda(t)\|_{\tilde{\ell}^2}\leq\beta t.
		\end{equation}
		Now we can let $A\rightarrow\infty$, a diagonal sequence argument yields a $\tilde \ell^2$ solution $\bm\lambda(t)$  of \eqref{e-evo} for all $t\in [0, \infty)$.  The uniqueness of solution follows from Lemma \ref{l:short time existence}.
	\end{proof}

	Given $\blambda\in \tilde{\ell}^2$, we associate to it the function $f_{\blambda}(x)=\sum_{i=0}^{\infty}e^{\lambda_i}x^i$ for $x\in [0,\infty)$.
	
	\begin{lem}\label{lem-e}
		
		Let $\blambda^j\in \tilde{\ell}^2$ be sequence such that $\| \blambda^j\|_{\tilde{\ell}^2}\leq C$ for a fixed $C>0$. Suppose $\blambda^\infty\in \tilde{\ell}^2$ satisfies  that 
		$$\lim_{j\to \infty}\lambda^j_i=\lambda^\infty_i$$
		for each $i$. Then $$\lim_{j\to \infty}f_{\blambda^j}(x)=f_{\blambda^\infty}(x),$$
		for all $x\in [0,\infty)$. Consequently,
		$$\lim_{j\to \infty}I_i(\blambda^j)=I_i(\blambda^\infty)$$
		for each $i$.
	\end{lem}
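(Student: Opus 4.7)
The plan is to deduce both claims from dominated convergence, using the observation that the $\ell^2$ bound on $\blambda^j-\barlambda$ automatically yields a uniform $\ell^\infty$ bound, which together with the explicit form $e^{\bar\lambda_i}=1/i!$ gives clean two-sided comparisons of $f_{\blambda^j}$ with $f_{\barlambda}(x)=e^x$.

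First I would set $\mu^j_i := \lambda^j_i-\bar\lambda_i$, so that $|\mu^j_i|\le \|\blambda^j-\barlambda\|_{\ell^2}\le C$ uniformly in $i$ and $j$, and likewise $|\mu^\infty_i|\le C$. The series defining $f_{\blambda^j}$ then becomes $\sum_i e^{\mu^j_i}x^i/i!$, and each term is pointwise dominated by $e^C x^i/i!$, a summable sequence in $i$ with finite sum $e^C e^x$. Since the hypothesis $\lambda^j_i\to\lambda^\infty_i$ immediately gives the termwise convergence $e^{\mu^j_i}x^i/i!\to e^{\mu^\infty_i}x^i/i!$ as $j\to\infty$, dominated convergence for series yields $f_{\blambda^j}(x)\to f_{\blambda^\infty}(x)$ for every $x\in[0,\infty)$.

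For the consequence, I would apply dominated convergence once more, this time to the integral
\[ I_i(\blambda^j)=\int_0^\infty \frac{e^{\lambda^j_i}x^i}{f_{\blambda^j}(x)}\,dx. \]
The numerator is bounded by $e^C x^i/i!$ as before, while the same uniform bound on $\mu^j_k$ combined with positivity of all summands gives the lower bound $f_{\blambda^j}(x)\ge e^{-C}\sum_k x^k/k! = e^{-C}e^x$. Hence the integrand is dominated uniformly in $j$ by the integrable function $e^{2C}x^i e^{-x}/i!$. Pointwise convergence of the integrand in $x$ follows from $e^{\lambda^j_i}\to e^{\lambda^\infty_i}$ together with the first part, and dominated convergence delivers $I_i(\blambda^j)\to I_i(\blambda^\infty)$.

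I do not anticipate a real obstacle here: the entire argument rests on the elementary reduction $\|\cdot\|_{\ell^\infty}\le\|\cdot\|_{\ell^2}$, after which both statements are routine passages to the limit under a sum or an integral. The only mild care needed is to notice that the lower bound on $f_{\blambda^j}(x)$ requires the positivity of each monomial $e^{\lambda^j_k}x^k$ on $[0,\infty)$, which lets one compare the series to $e^{-C}e^x$ termwise even without invoking the convergence established in the first part.
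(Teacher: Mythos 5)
Your proof is correct and follows essentially the same route as the paper's: both arguments rest on the uniform bound $|\lambda^j_i-\bar\lambda_i|\le\|\blambda^j-\barlambda\|_{\ell^2}\le C$, which gives the two-sided comparison of $f_{\blambda^j}$ with $e^x$, and then pass to the limit by dominated convergence (the paper phrases the series step as an explicit tail-truncation, which is the same argument). No gaps.
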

	\begin{proof}
		Fix $x\in [0,\infty)$, then for any $\epsilon>0$, there exists $N>0$ such that $\sum_{i=N}^{\infty}e^{\bar{\lambda}_i}x^i<\epsilon$. Therefore, 
		$$\sum_{i=N}^{\infty}e^{\lambda^j_i}x^i<e^C\epsilon,$$
		for all $j$. But $\sum_{i=0}^{N+1}e^{\lambda^j_i}x^i$ converges to $\sum_{i=0}^{N+1}e^{\lambda^\infty_i}x^i$ by the assumption. Therefore,  $$\lim_{j\to \infty}f_{\lambda^j}(x)=f_{\lambda^\infty}(x).$$
		
		For the second part, the argument is similar. We only need to notice that if we fix $i$, then for any $\epsilon>0$, there exists $N>0$ such that $$\int_{N}^{\infty}\frac{e^{\bar{\lambda}_i}x^i}{e^x}dx<\epsilon.$$
		Then by dominated convergence, we get the conclusion.
	\end{proof}

	\
	
	We define the (unperturbed) energy function on $\tilde\ell^2$ by
	$$E(\blambda)=\sum_{i=0}^{\infty}F_i^2(\blambda).$$
	Since $E_s(\blambda^s(t)))\leq \beta^2$, by Fatou's Lemma $E(\blambda(t))\leq \beta^2$.
	Then along the solution $\blambda(t)$, we have
	\begin{equation}
	\frac{d}{dt}E(\blambda(t))=-\sum_{i,j}	\int_0^\infty \frac{e^{\lambda_i+\lambda_j}x^{i+j}(\dot{\lambda}_i-\dot{\lambda}_j )^2  }{(\sum_{k=0}^{\infty}e^{\lambda_k} x^k)^2}dx<0
	\end{equation}
	
	\section{A priori bounds}
	
	In order to study the limits of $\blambda(t)$ as $t\to \infty$, we need to obtain a priori bounds. For our purpose we only need to take limits of $\blambda(t)$ modulo a constant sequence so we can always normalize so that $\lambda_0(t)=0$. Notice that after normalization $\blambda(t)$ may not be in $\tilde l^2$ but still in $\tilde l^\infty$.

	Now we consider a sequence $\blambda\in \tilde l^\infty$, normalized so that $\lambda_0=0$. We assume $E(\blambda)\leq\beta^2$, which holds along the above flow $\blambda(t)$. As before we define the function
	$f(x)=\sum_{i=0}^{\infty} e^{\lambda_i} x^i$, $x\in [0,\infty)$. The normalization implies that  $f(0)=1$. In the following we will derive uniform bounds on $\lambda_i$ depending only on $\beta$. The lower bound has no restriction on $\beta$, but the upper bound will require $\beta<\frac{1}{2}$. 
	
	\subsubsection{Lower bounds}\label{subset-low}
	We use an observation of Hall-Williamson \cite{HW}. 
	Notice  that $$u(x)=xf'(x)/f(x)$$ is strictly increasing in $x$, so for any $x>0$ and $t\in (0,1)$ we have
	$$-\frac{t}{1-t}x \frac{f'( x)}{f(x)} \leq \log \frac{f((1-t)x)}{f(x)}\leq  -t(1-t)x\frac{f'((1-t)x)}{f((1-t)x)}.$$
	From  this one easily obtains
	$$(1-t) \int_0^{\infty} \frac{f((1-t)x)}{f(x)}dx\leq \int_0^{\infty}e^{-tx\frac{f'(x)}{f(x)}}dx \leq  \int_0^{\infty} \frac{f(x/(1+t))}{f(x)}dx.$$
	By the energy  bound assumption we have
	$$\int_0^{\infty}\frac{f((1-t)x)}{f(x)}dx=\sum_{i=0}^{\infty}[\int_0^{\infty} \frac{e^{\lambda_i} x^i}{\sum_{j=0}^{\infty}e^{\lambda _j}x^j}dx](1-t)^i=\frac{1}{t}-\beta+\sum_{i=0}^{\infty}F_i(\blambda)(1-t)^i, $$
	and 
	$$\int_0^{\infty}\frac{f(x/(1+t))}{f(x)}dx=\sum_{i=0}^{\infty}[\int_0^{\infty} \frac{e^{\lambda_i} x^i}{\sum_{j=0}^{\infty}e^{\lambda _j}x^j}dx](1+t)^{-i}=\frac{1+t}{t}-\beta+\sum_{i=0}^{\infty}F_i(\blambda)t^i(1+t)^{-i}. $$
	Noticing that   for all $t\in (0,1)$
	$$\sum_{i=0}^{\infty} F_i(\blambda)(1-t)^i\leq \beta\sqrt{\sum_{i=0}^{\infty} (1-t)^{2i}}\leq \beta t^{-\frac 1 2}. $$
	Now view $u$ as a variable and denote $\phi(u)=\frac{dx}{du}$, then we get  for all $t>0$ that 
	$$|\int_0^{\infty} e^{-tu} \phi(u) du-\frac{1}{t}|\leq C_0\cdot \frac{t^{1/2}+1}{t^{1/2}}, $$
	where the constant $C_0$ depends only on $\beta$.
	Now we use the idea of Karamata in the proof of Tauber-Hardy-Littlewood theorem. Let $g$ be any measurable function on $[0,1]$ with bounded variation. Fix any $\epsilon>0$ small, by the Stone-Weierstrass approximation theorem there are polynomials $p$ and $P$ such that
	$$p(x)< g(x)< P(x), $$ 
	and
	$$\int_0^{\infty} e^{-u}(P(e^{-u})-p(e^{-u}))dt< \epsilon.$$
	For any $n\in \N$, we have
	$$|\int_0^{\infty} e^{-tu}e^{-ntu} \phi(u) du-\frac{1}{(n+1)t}|\leq C_0\frac{1+(n+1)^{1/2}t^{1/2}}{(n+1)^{1/2}t^{1/2}}.$$
	Let $N$ be the maximum of $\deg p$ and $\deg P$. Then 
	$$|\int_0^{\infty} e^{-tu} P(e^{-tu})\phi(u)du-\frac{1}{t} \int_0^{\infty} e^{-u} P(e^{-u}) du|\leq C_1\sum_{n=0}^N\frac{1+(n+1)^{1/2}t^{1/2}}{(n+1)^{1/2}t^{1/2}}. $$
	So 
	$$\int_0^{\infty} e^{-tu} g(e^{-tu})\phi(u) du\leq \frac{1}{t}(\int_0^{\infty} e^{-u} g(e^{-u})du+\epsilon)+C_1 \sum_{n=0}^N\frac{1+(n+1)^{1/2}t^{1/2}}{(n+1)^{1/2}t^{1/2}}. $$
	Similarly we have
	$$\int_0^{\infty} e^{-tu} g(e^{-tu})\phi(t) dt\geq \frac{1}{t}(\int_0^{\infty} e^{-u} g(e^{-u})du-\epsilon)-C_1 \sum_{n=0}^N\frac{1+(n+1)^{1/2}t^{1/2}}{(n+1)^{1/2}t^{1/2}}. $$
	Here the constant $C_1$ depends only on $C_0$ and $g$.
	Now define $g(x)$ to be zero for $x< e^{-1}$ and $x^{-1}$ for $x\in [e^{-1}, 1]$. Then 
	
	$$\int_0^{\infty} e^{-u} g(e^{-u}) du =1, $$
	and
	
	$$\int_0^{\infty} e^{-tu} g(e^{-tu})\phi(u)du=\int_0^{t^{-1}} \phi(u) du. $$
	Let $T=t^{-1}$ and fix $\epsilon$ sufficiently small we obtain 
	$$\int_0^T \phi(u)du\leq (1+\epsilon)T+C _1\sum_{n=0}^N\frac{T^{1/2}+(n+1)^{1/2}}{(n+1)^{1/2}}. $$
	In particular 
	\begin{equation}  \label{e-6}
	\int_0^T \phi(u) du \leq C_2(T+1), 
	\end{equation} 
	for a different constant $C_2$, and for all $T>0$. 
	Similarly we have
	\begin{equation}\label{e-7}
	\int_0^T \phi(u)du\geq (1-\epsilon)T-C_2 \sum_{n=0}^N\frac{T^{1/2}+(n+1)^{1/2}}{(n+1)^{1/2}}. 
	\end{equation}
	Using (\ref{e-6}) we obtain that 
	\begin{equation} \label{e-8}
	x(u)\leq C_2(u+1),
	\end{equation}
	and then we have 
	$$\frac{f'(x)}{f(x)} = \frac{u}{x} \geq \frac u {C_2(u+1)}. $$ 
	So $$\log f(x)\geq \frac{1}{C_2} [u-\log (u+1)]$$
	From equation (\ref{e-7}) there is a constant $C_3>0$ such that 
	\begin{equation} \label{e-9}
	u\leq \max(C_3, 2x).
	\end{equation}
	So we have
	$$\log f(x)\geq \frac{1}{C_2}[\frac{x}{C_2}-1-\log \max(C_3, 2x)]\geq C_4x-C_5.$$
	Therefore we obtain 
	\begin{equation} \label{e-10}
	f(x)\geq e^{C_4x-C_5}.
	\end{equation}
	Using the assumption that $|F_i(\blambda)|^2\leq E(\blambda)\leq \beta^2$ for $i$ and the definition of $F_i$ we can easily derive a lower bound 
	\begin{equation} \label{e-11}
	e^{\lambda_i}\geq \frac{C_6C_5^{i+1}}{\Gamma(i+1)}
	\end{equation}
	for some constant $C_6>0$.
	
	\
	
	\subsubsection{Upper bounds}
	Now we turn to obtain upper bounds on $\blambda=\{\lambda_i\}$.
	Since
	\begin{eqnarray*}
		1-\beta+F_0(\blambda)
		&=&  \int_0^{\infty}\frac{e^{\lambda_0}}{\sum_{j=0}^{\infty}e^{\lambda_j} x^j}dx\leq \int_0^{\infty} \frac{1}{1+e^{\lambda_2}x^2}dx\\
		&=&\int_0^{\infty}\frac{1}{1+e^{\lambda_2}x^2}dx=e^{-\frac{\lambda_2}{2}} \frac{\pi}{2}, 
	\end{eqnarray*}
	we have
	\begin{equation}e^{\frac{\lambda_2}{2}}\leq \frac{\pi}{2(1-\beta+F_0(\blambda))}.
	\label{e:estimate on lambda_2}	
	\end{equation}
	Similarly, for all $i>0$
	$$e^{\frac{\lambda_{i+2}-\lambda_i}{2}} \leq \frac{\pi}{2(1+F_i(\blambda))}\leq \frac{\pi}{2(1-\beta)}, $$
	noticing that $|F_i(\blambda)|^2\leq E(\blambda)\leq\beta^2$ for $i>0$.
	Therefore for $n$ even,
	$$e^{\lambda_n}\leq C_7^ne^{\lambda_2},$$
	and for $n$ odd,
	$$e^{\lambda_n}\leq C_8^ne^{\lambda_1}.$$
	We have
	\begin{prop}
		For any $\delta>0$  there exists $C_\delta>0$ depending only on $\delta$ such that if $\beta\leq \frac12-\delta$, then $e^{\lambda_1}+e^{\lambda_2}\leq C_\delta$.
	\end{prop}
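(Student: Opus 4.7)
The plan is to bound $e^{\lambda_2}$ and $e^{\lambda_1}$ separately, using in each case only the pointwise bound $|F_i(\blambda)|\le \sqrt{E(\blambda)}\le \beta$ for every $i$. For $e^{\lambda_2}$ the inequality \eqref{e:estimate on lambda_2} already does most of the work: it gives $e^{\lambda_2/2}\le \pi/(2(1-\beta+F_0(\blambda)))$, and under the assumption $\beta\le 1/2-\delta$ one has $1-\beta+F_0(\blambda)\ge 1-2\beta\ge 2\delta$, so immediately $e^{\lambda_2}\le \pi^2/(16\delta^2)$.

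For $e^{\lambda_1}$ the idea is to combine the $F_0$ and $F_1$ identities through a splitting of the integration domain. By definition of $F_0$ and $F_1$,
\begin{equation*}
\int_0^\infty \frac{1}{f(x)}\,dx=1-\beta+F_0(\blambda)\ge 2\delta,
\qquad
\int_0^\infty \frac{e^{\lambda_1} x}{f(x)}\,dx=1+F_1(\blambda)\le 1+\beta.
\end{equation*}
I would split the first integral as $\int_0^1+\int_1^\infty$. On $[0,1]$, the trivial lower bound $f(x)\ge 1+e^{\lambda_1}x$ (every remaining coefficient of $f$ is non-negative) gives $\int_0^1 1/f\,dx\le \log(1+e^{\lambda_1})/e^{\lambda_1}$. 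On $[1,\infty)$ the inequality $1\le x$ allows us to upgrade to $\int_1^\infty 1/f\,dx\le \int_0^\infty x/f\,dx=(1+F_1(\blambda))/e^{\lambda_1}\le (1+\beta)/e^{\lambda_1}$. Adding the two pieces and comparing with the lower bound on the full integral yields
\begin{equation*}
2\delta\cdot e^{\lambda_1} \le \log\bigl(1+e^{\lambda_1}\bigr) + (1+\beta).
\end{equation*}
Since $\log(1+a)/a\to 0$ as $a\to\infty$, this inequality bounds $e^{\lambda_1}$ by a constant $C_\delta$ depending only on $\delta$.

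The substantive point is the Markov-type comparison $\int_1^\infty 1/f\le \int_0^\infty x/f$, which lets us trade a tail bound on $1/f$ for the controlled quantity $1+F_1$. The hypothesis $\beta<1/2$ enters precisely to ensure positivity of $1-2\beta$ on the left side of the final inequality; as $\beta\to 1/2$ the estimate degenerates, consistent with the expectation that $e^{\lambda_1}$ should blow up there. No further serious obstacle is anticipated beyond choosing the correct splitting point and the right two-term truncation of $f$.
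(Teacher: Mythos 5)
Your proof is correct, and while the $e^{\lambda_2}$ bound is identical to the paper's (both read it off from \eqref{e:estimate on lambda_2} together with $|F_0(\blambda)|\leq\beta$ and $1-2\beta\geq 2\delta$), your treatment of $e^{\lambda_1}$ takes a genuinely different route. The paper bounds $\int_0^\infty f(x)^{-1}dx$ from above by $\int_0^\infty (1+e^{\lambda_1}x+Cx^2)^{-1}dx$, where $C$ is the lower bound on $e^{\lambda_2}$ coming from \eqref{e-11}; that integral tends to $0$ as $e^{\lambda_1}\to\infty$, which contradicts the lower bound $2\delta$ on the $F_0$ integral. This makes the upper bound logically dependent on the whole Hall--Williamson/Karamata machinery of the lower-bound subsection. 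Your argument replaces that input by the $F_1$ identity: splitting at $x=1$, using $f(x)\geq 1+e^{\lambda_1}x$ on $[0,1]$ and the Markov-type comparison $\int_1^\infty f^{-1}\leq \int_0^\infty x f^{-1}=(1+F_1(\blambda))e^{-\lambda_1}$ on the tail, you arrive at $2\delta\, e^{\lambda_1}\leq \log(1+e^{\lambda_1})+1+\beta$, which forces $e^{\lambda_1}\leq C_\delta$. This is self-contained within the energy bound $|F_i(\blambda)|\leq\beta$ and does not use \eqref{e-11} at all, at the cost of invoking one more component of the balanced system ($F_1$) rather than only $F_0$. Both arguments degenerate as $\beta\to\frac12$ for the same reason, namely the loss of positivity of $1-2\beta$.
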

	\begin{proof}
		The estimate on $e^{\lambda_2}$ follows from \eqref{e:estimate on lambda_2} and the fact that $|F_0(\blambda)|^2\leq E(\blambda)\leq\beta^2$.
		For $e^{\lambda_1}$ we compute
		\begin{eqnarray*}
			1-\beta+F_0(\blambda)&=& \int_0^{\infty} \frac{1}{\sum_{j=0}^{\infty} e^{\lambda_j} x^j}dx\\
			&\leq & \int_0^{\infty} \frac{1}{1+e^{\lambda_1}x+Cx^2} dx,
		\end{eqnarray*}
		where we have used the previous lower bound on $e^{\lambda_2}$. 
		It is then easy to deduce the desired bound on $e^{\lambda_1}$.
	\end{proof}

	Thus we have achieved an upper bound that for $\beta\leq\frac12-\delta$
	\begin{equation}\label{e-12}
	e^{\lambda_i}\leq C_9^i 
	\end{equation}
	for all $i$.  From this and estimate \eqref{e-11}, it follows that the entire function $f(x)=\sum_{i=0}^{\infty}e^{\lambda_i} x^i$ satisfies 
	$$\frac{f'(x)}{f(x)}\leq C, $$ for a uniform constant $C$ and all $x\leq \frac{1}{2C_9}$.  Combining this with equation (\ref{e-9}) we see that 
	$f'(x)/f(x)$ is uniformly bounded for all $x$. Then we easily obtain a bound
	$$f(x)\leq e^{Cx}, $$
	for a different constant $C$,
	and so $$e^{-\lambda_i}\geq \frac{1}{1-\beta}\int_{0}^{\infty}\frac{x^i}{e^{Cx}}dx.$$
	Therefore
	\begin{equation}\label{e-13}
	e^{\lambda_i}\leq \frac{1}{C^{i+1}\Gamma(i+1)}, 
	\end{equation}  
	for some $C$ independent of $i$.

	\section{Infinite time convergence}
	
	We go back to the solution $\blambda(t)$ of \eqref{e-evo}. 
	We first consider the case $\beta<\frac12$.
	Since $E(\blambda(t))$ is non-negative, there is a sequence $t_q\to \infty$ such that $\frac{d}{dt}E(\blambda(t_q))\to 0$. By our estimates \eqref{e-11} and \eqref{e-13}, we must have $\dot{\lambda}_i(t_q)-\dot{\lambda}_j(t_q)\to 0$ for all pairs $i,j$.
	Since $\sum_{i=0}^\infty \dot{\lambda}_i(t_q)^2=E(\blambda(t_q))\leq E(\blambda(0))=\beta^2  $, we must have $\lim_{q\to\infty}\dot{\lambda}_i(t_q)=0$ for all $i$. Therefore, we have 
	$\lim_{q\to\infty}F_i(\blambda(t_q))=0$ for all $i$.
	
	We define the normalized sequence $\blambda'(t_q)$ by $\lambda'_i(t_{q})\equiv \lambda_i(t_{q})-\lambda_0(t_{q})$. Notice that $\bm F(\blambda'(t_q))=\bm(\blambda(t_q))$ . Then by the estimates \eqref{e-11} and \eqref{e-13}, for each $i$, we can choose a subsequence of $\{t_q\}$ so that $\lambda_i(\infty)=\lim_{k\to \infty}\lambda'_i(t_{q_k})$ exists. By a diagonal argument, we can find a subsequence which, by abuse of notation, we still denote by $\{t_q\}$ such that 
	$\lambda_i(\infty)=\lim_{q\to\infty}\lambda'_i(t_{q})$ exists for each $i$. Then by \eqref{e-11} and \eqref{e-13} again, we have that $f_\infty(x)=\sum_{i=0}^\infty e^{\lambda_i(\infty)}x^i$ is an entire function. And moreover, we have
	$$\lim_{q\to\infty}\int_{0}^{\infty}\frac{e^{\lambda'_i(t_q)}x^i}{\sum e^{\lambda'_j(t_q)}x^j}dx=\int_{0}^{\infty}\frac{e^{\lambda_i(\infty)}x^i}{\sum e^{\lambda_j(\infty)}x^j}dx. $$
	Therefore $$\int_{0}^{\infty}\frac{e^{\lambda_i(\infty)}x^i}{\sum_{j=0}^{\infty} e^{\lambda_j(\infty)}x^j}dx=1-\beta\delta_{0i}.$$
	It follows that $\blambda(\infty)\equiv\{\lambda_i(\infty)\}$ is a solution to the balanced equation \eqref{e-1}. This proves the existence part of Theorem \ref{t:main} under the assumption that $\beta\leq\frac12$.
	
	\
	
	Denote by $f(x)=\sum_{i=0}^{\infty}e^{\hat{\lambda}_i}x^i$ a solution of equation \eqref{e-evo} with $\beta_1=\frac12-\delta$ for some $\delta>0$ small. We then solve the differential equation \eqref{e-evo} again with initial value replaced by this new $\hat{\blambda}=(\hat{\lambda}_i)$. To see that we have the same conclusion as when the initial value is $\frac{1}{\Gamma(i+1)}$, one simply notices that the only place where we have used the properties of the initial value $\{\bar\lambda_i\}$ is in the proof of Lemma \ref{lem-e}. But since the coefficients of $f(x)$ satisfies the estimates \eqref{e-11} and \eqref{e-13}, $f(x)$ shares the same properties, namely
	\begin{itemize}
		\item Fix $x\geq 0$, then for for all $\epsilon>0$, There exists $ N>0$ such that $\sum_{i=N}^{\infty}e^{\hat{\lambda}_i}x^i<\epsilon$.
		\item Fix $i\geq 0$, then for  all $  \epsilon>0$, There exists $ N>0$ such that $\int_{i=N}^{\infty}\frac{e^{\hat{\lambda}_i}x^i}{f(x)}<\epsilon$.
	\end{itemize} 
	Therefore, we get solution $\hat\blambda(t)$ on $[0,\infty)$ for $\beta\geq \frac12-\delta$. 
	
	Now we have energy $E_0(\hat\blambda(t))\leq (\beta-\beta_1)^2 $, so $|F_0(\hat\blambda(t))|<\beta-(\frac12-\delta)$. So when $\beta\leq \frac{1+\beta_1-\delta}{2}$, we can also get upper bound \eqref{e-13}, hence solving the balancing equation \eqref{e-1}. We can the repeat this process to get a sequence $\beta_n$ satisfying $$\beta_{n+1}=\frac{1+\beta_n-\delta}{2}.$$
	Then since $\delta$ can be chosen arbitrarily small, we have proved the existence of solution  to \eqref{e-1} for all $\beta<1$.

	\bibliographystyle{plain}

	\bibliography{references}
\end{document}